\newtheorem{thm}{Theorem}
\newtheorem{lem}[thm]{Lemma}
\newtheorem{rmk}[thm]{Remark}
\newtheorem{crl}[thm]{Corollary}
\newcommand{\R}{\mathbb{R}}
\newcommand{\eps}{\varepsilon}
\newcommand{\norm}[1]{\left\lVert#1\right\rVert}
\newcommand{\eq}[1]{\begin{equation}\begin{aligned} #1 \end{aligned}\end{equation}}
\newcommand{\fm}[1]{\[\begin{aligned} #1 \end{aligned}\]}
\newcommand{\lra}[1]{\langle #1 \rangle}
\newcommand{\ud}{\mathrm{d}}
\begin{document}

\title*{John's blow up examples and scattering solutions for semi-linear wave equations}
\author{Louie Bernhardt, Volker Schlue, Dongxiao Yu}
\institute{Louie Bernhardt \at School of Mathematics and Statistics, University of Melbourne, Australia\\\email{lbernhardt@student.unimelb.edu.au}
\and
Volker Schlue \at School of Mathematics and Statistics, University of Melbourne, Australia\\\email{volker.schlue@unimelb.edu.au}
\and
Dongxiao Yu \at Department of Mathematics, University of California, Berkeley, US\\\email{yudx@math.berkeley.edu}}

\maketitle

\abstract{
In light of recent work of the third author, we revisit a classic example given by Fritz John of a semi-linear wave equation which exhibits finite in time blow up for all compactly supported data.
We present the construction of future global solutions from asymptotic data given in \cite{yu2022nontrivial} for this specific example,
and clarify the relation of this result of Yu to John's theorem in \cite{MR0600571}.
Furthermore we present a novel blow up result for finite energy solutions satisfying a sign condition due to the first author,
and invoke this result to show that the constructed backwards in time solutions blow up in the past.
}

\section{Introduction}

At the MATRIX workshop on \textbf{Hyperbolic PDEs and Nonlinear Evolution Problems}, in September 2023, Dongxiao Yu presented several global existence results \textit{from asymptotic data} for a class of quasi-linear wave equations \cite{yu2022nontrivial}. These \textit{backwards in time} results are compelling because they apply in particular to wave equations for which \textit{forward in time} global existence is not known, or worse, are known to admit blow up solutions.

The topic of this note is the classic example 
\begin{equation}
\label{eq:john:intro}
    -\Box u=(\partial_t u)^2\qquad \text{: on } \mathbb{R}^{3+1}\,.
\end{equation}
This is a \textit{semi-linear} wave equation for which Fritz John showed that \emph{forward in time solutions arising from data of compact support} blow up in finite time \cite{MR0600571}. We recall the statement of this well-known fact, and his argument briefly in Section~\ref{sec:forward}; excellent expositions of John's results in \cite{MR0600571} can be found in the lecture notes \cite{notes:const,notes:holzegel}.

The forward in time results use the initial value formulation for \eqref{eq:john:intro},
and rely on the method of spherical means.
The \emph{backward in time solutions} constructed by Yu
use \textit{H\"ormander's asymptotic system} \cite{MR897781,MR1120284,MR1466700} to find an approximate solution,
and rely on energy estimates for the remainder.

A similar philosophy has been applied by Lindblad and Schlue \cite{LS}
to obtain global existence from \textit{scattering data, backwards in time},
for \textit{semi-linear} wave equations which admit \textit{forward in time} global solutions.
The classic examples are here wave equations satisfying the null condition \cite{K86},
\begin{equation}
    \label{eq:null}
    \Box u=Q(\partial u,\partial u)\,
\end{equation}
which, in contrast to \eqref{eq:john:intro} admit global solutions which asymptote to a given \textit{linear} solution, and 
\begin{equation}
\label{eq:lin}
    u(t,x)\sim \varepsilon r^{-1} U(q,\omega) \qquad (r\sim t, t\to\infty)  
\end{equation}
where $r=|x|$, $q=r-t$, and $\omega=x/|x|$.
In particular, scattering solutions can be constructed from knowledge of the \textit{radiation field} $U(q,\omega)$.

The situation is very different for \eqref{eq:john:intro}.
As we will discuss in this note,  the existence results for \eqref{eq:john:intro} that go beyond \textit{almost global existence} for small data, depend on carefully chosen data, either asymptotically or initially.

In Section~\ref{sec:backward},
we demonstrate that there are solutions which  approach a solution of the asymptotic system,
in the sense that
\begin{equation}
    u(t,x)\sim \varepsilon r^{-1} U(s,q,\omega) \qquad (r\sim t, t\to\infty)  
\end{equation}
where $s=\varepsilon \ln(t)$, and $U$ satisfies
\begin{equation}
    \label{eq:asymp:intro}
    2U_{sq}+U_q^2=0.
\end{equation}
Following \cite{yu2022nontrivial},
we first prepare a solution to \eqref{eq:asymp:intro} which does not blow up,
and then find a corresponding solution to \eqref{eq:john:intro}.
This construction is a special case of the results in \cite{yu2022nontrivial} which apply more generally to \textit{quasi-linear wave equations}. In contrast to \eqref{eq:lin}, these solutions \textit{do not have a radiation field}.

The main result that motivates this note is Theorem~\ref{thm:yu}, in Section~\ref{sec:backward}:
There do exist $\mathrm{C}^k$ solutions of \eqref{eq:john:intro} for $t\geq 0$.
We will call these solutions \textbf{future global}.

In view of John's blow up result  
these solutions cannot be compactly supported at $t=0$.
Indeed, while Theorem~\ref{thm:john} states that the only \textbf{global} $C^k$  solution that arises from compactly supported data is the trivial solution,
it is clear from John's argument in Section~\ref{sec:forward}  that solutions of \eqref{eq:john:intro} arising from compactly supported data \textit{blow up both to the future, and to the past}. (They are neither future nor past global.)

This also shows that the blow-up results of John \cite{MR0600571} do not trivially extend from compactly supported data to general finite energy solutions. As we have seen, there are \textit{some} finite energy solutions, namely the solutions of Theorem~\ref{thm:yu} which are future global.
However, this raises the question if these solutions are also \textbf{past global}.

In Section~\ref{sec:sign},
we  show with an argument that is due to the first author,
that the solutions of Theorem~\ref{thm:yu} are not \textbf{global}:
While they are future global, they \textit{blow up in the past}.
Theorem~\ref{thm:bernhardt} gives a sufficient sign condition for blow up of finite energy solutions to \eqref{eq:john:intro}, and Corollary~\ref{crl:thm2solblowup} shows that the solutions constructed in \cite{yu2022nontrivial} satisfy that sign condition.

The occurence of a sign condition is expected in this setting:
Already for the asympototic equation \eqref{eq:asymp:intro} the sign of the initial data for $U_q$ at $s=0$ is relevant for existence, for $s>0$.

Thus this note clarifies the relation between the global existence results of the third author,
and the blow up results of John, in the special case of the equation \eqref{eq:john:intro}.
 We leave open the question if there are \textit{any} global finite energy solutions to \eqref{eq:john:intro}.

\section{John's blow-up result for the forward solution}
\label{sec:forward}

Fritz John's classical paper \cite{MR0600571} contains the result that is most relevant to this note:

\begin{thm}[John '81]
\label{thm:john}
  Suppose $u:\mathds{R}^{3+1}\to\mathds{R}$ is a solution of
  \begin{gather}
    \label{eq:12}
    \partial_t^2u-\Delta u=(\partial_t u)^2\,,\\
    u(0,x)=u_0\,,\quad \partial_tu(0,x)=u_1
  \end{gather}
  with \emph{smooth data of compact support} $u_0,u_1\in\mathrm{C}^\infty_0$.
  
  If $u\in\mathrm{C}^2(\mathds{R}^3\times [0,\infty)$, then  $u=0$ identically. 
\end{thm}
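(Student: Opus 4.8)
The plan is to argue by contraposition: assuming $u \not\equiv 0$, I will show that $u$ cannot remain $C^2$ for all $t \ge 0$, so that the only globally smooth solution arising from compactly supported data is the trivial one. Let $\rho > 0$ be such that $\operatorname{supp} u_0, \operatorname{supp} u_1 \subset \{|x| \le \rho\}$. Since \eqref{eq:12} is invariant under translations in $x$, I may center the analysis at any point of space. The engine of the proof is the \emph{method of spherical means}: writing $\bar u(t,r) = \frac{1}{4\pi}\int_{S^2} u(t,r\omega)\,\ud\omega$ and $v(t,r) = r\,\bar u(t,r)$, the classical reduction turns the radial part of \eqref{eq:12} into a one–dimensional wave equation with a source.

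First I would record the heuristic that makes the sign of the nonlinearity decisive. Integrating \eqref{eq:12} over space and using $\int \Delta u \,\ud x = 0$ gives, for $F(t) = \int_{\mathbb{R}^3} u\,\ud x$,
\begin{equation}
F''(t) = \int_{\mathbb{R}^3} (\partial_t u)^2\,\ud x \;\ge\; 0 ,
\end{equation}
so $F$ is convex; this already exhibits the one-signed feedback coming from the nonnegative nonlinearity. It is, however, \emph{not} quantitatively sufficient: finite speed of propagation only bounds the support by a ball of volume $\sim (1+t)^3$, and Cauchy--Schwarz applied in the form $(F')^2 \le \operatorname{vol}\cdot F''$ yields a Riccati inequality whose weight is integrable in $t$, hence fails to force blow up. The resolution, and the real content of John's argument, is that the solution concentrates near the light cone $r \approx t$.

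To capture this I would pass to the spherical means, where the reduction gives $v_{tt} - v_{rr} = r\,\overline{(\partial_t u)^2}$ together with $v(t,0) = 0$ and, by finite speed of propagation, $v(t,r) = 0$ for $r \ge t + \rho$. Jensen's inequality $\overline{(\partial_t u)^2} \ge (\overline{\partial_t u})^2 = (v_t/r)^2$ then yields the closed differential inequality
\begin{equation}
v_{tt} - v_{rr} \;\ge\; \frac{v_t^{\,2}}{r} \;\ge\; 0 .
\end{equation}
Passing to a null coordinate $q = r - t$ and to a logarithmic time, in the spirit of the asymptotic system \eqref{eq:asymp:intro}, I would integrate this inequality along outgoing characteristics, using the positivity of the source to propagate a lower bound for $\partial_t \bar u$ on the cone, and show that the resulting quantity obeys a Riccati-type ordinary differential inequality of the schematic form $2\partial_s W + W^2 \le 0$. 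For nontrivial data the relevant profile $W$ is strictly negative somewhere, and such an inequality drives $W$ to $-\infty$ at a finite value of the slow time, i.e. at a physical time that is finite in general and of the almost-global size $\sim e^{c/\eps}$ for data of size $\eps$. This contradicts $u \in C^2(\mathbb{R}^3 \times [0,\infty))$.

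The main obstacle is precisely this quantitative step: one must show that, despite the nonlinearity filling in the interior of the light cone, enough of the solution remains concentrated in the shell $r \approx t$ for the Riccati feedback to close, and one must track the \emph{logarithmically slow} accumulation that places the blow up at the almost-global time rather than at an $O(1)$ time. A secondary point is the passage from spherical means back to $u$: running the argument with the center taken at an arbitrary point $x_0 \in \mathbb{R}^3$, a globally $C^2$ solution is forced to avoid blow up for every $x_0$, which by the Riccati dichotomy makes every radial profile trivial and hence yields $u \equiv 0$. I note that no sign condition on the data is needed here --- in contrast to the finite-energy statement of Theorem~\ref{thm:bernhardt} --- because the source $(\partial_t u)^2$ is automatically nonnegative, so every nontrivial compactly supported datum triggers the same one-signed feedback.
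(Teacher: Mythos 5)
Your outline correctly identifies the broad strategy of the paper's proof (spherical means, reduction to a $1+1$ problem, Cauchy--Schwarz/Jensen, a Riccati-type inequality, blow up at the almost global time scale), but the decisive step is missing, and the route you propose to supply it does not close. You reduce to the pointwise inequality $v_{tt}-v_{rr}\ge v_t^2/r$ and then say you ``would integrate along outgoing characteristics'' to obtain a closed ordinary differential inequality $2\partial_s W+W^2\le 0$ for a profile $W\approx\partial_t\bar u$. Writing $v_{tt}-v_{rr}=(\partial_t-\partial_r)(\partial_t+\partial_r)v$, integrating along an outgoing characteristic only bounds one null derivative of $v$ by an integral of $v_t^2/r$; since $v_t$ is a combination of \emph{both} null derivatives, this does not yield a self-contained ODE for a single scalar quantity, and no rearrangement of the pointwise inequality alone closes the loop. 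This is precisely where John's argument uses something you never invoke: the d'Alembert/Duhamel representation formula \eqref{eq:mean:dalembert} for $rM[u]$, i.e.\ a double integral of the source over the backward characteristic triangle. Restricting that double integral, and using the support property $u=0$ for $r>t+R$ to integrate out the \emph{time} variable (fundamental theorem of calculus plus Cauchy--Schwarz), gives \eqref{eq:mean:lower:calc}: a lower bound for $M[u]$ at $(t,r)$ by an integral of $|M[u]|^2$ along the outgoing characteristic. Only then does one obtain the closed Riccati inequality \eqref{eq:26} for the quantity \eqref{eq:25}, which is built from $u$ itself rather than from $\partial_t u$ (a distinction the paper emphasizes), and whose $1/\rho$ weight produces the logarithmic divergence and hence blow up unless $M[u]$ vanishes identically along characteristics.

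Second, your mechanism for nontriviality is misidentified and internally inconsistent. You first require that ``the relevant profile $W$ is strictly negative somewhere'' for nontrivial data --- a claim you never prove and which is not obvious --- and then at the end assert that ``no sign condition on the data is needed.'' In John's proof, sign-freeness comes from a structural fact absent from your proposal: for $t-r>R$, both $u_0$-terms and the $u_1$-integral in \eqref{eq:mean:dalembert} vanish identically (compact support combined with the parity of the reflected d'Alembert formula), so in that region $M[u]$ is bounded below by the nonnegative Duhamel integral, and positivity propagates without any hypothesis on the sign of the data. Your convexity remark on $F(t)=\int u$ is a reasonable (and correctly discarded) warm-up, and your observation that the argument must be run for every center $x_0$ to conclude $u\equiv 0$ matches the paper; but as it stands the proposal asserts rather than proves the two steps that constitute the actual content of the theorem --- indeed your closing paragraph concedes that the ``main obstacle'' is left unresolved.
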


\begin{rmk}
    \label{rmk:john:example}

The theorem refers to \emph{general solutions} with compactly supported data.
The more frequently referred to statement, which is also easier to prove, is the fact that there are \emph{some} solutions that blow up.
These are constructed from \emph{spatially homogeneous} solutions, $u(t,x)=u(t)$, which satisfy the ODE
\begin{equation}
\label{eq:john:example}
    \partial_t u_t=(u_t)^2\,.
\end{equation}
This ODE should be compared to \eqref{eq:asymp:intro}.
In both cases the \emph{sign} of the initial data is crucial:
In \eqref{eq:john:example} it is chosen to ensure blow-up,
and for \eqref{eq:asymp:intro} it is chosen to \emph{prevent} blow-up.
In view of these simple examples it is also not surprising that a \emph{sign condition} is relevant in the general setting of Section~\ref{sec:sign}.

\end{rmk}

The proof of Theorem~\ref{thm:john} is a spherical means argument,
and relies on the fact that in  $3+1$ dimensions
the spherical mean
\begin{equation}
  M_{t,r}[u](x)=\frac{1}{4\pi}\int_{|\xi|=1}u(t,x+r\xi)\ud \sigma(\xi)
\end{equation}
of any solution to \eqref{eq:12} satisfies
\begin{equation}
  \label{eq:15}
  \partial_t^2 M_{t,r}[ru](x)-\partial_r^2 M_{t,r}[ru](x)=M_{t,r}[r(\partial_t u)^2](x)\,.
\end{equation}
This well-known reduction to a $1+1$-dimensional wave equation in $(t,r)$
then yields the representation formula
\begin{multline}
  \label{eq:mean:dalembert}
  M_{t,r}[ru](0)=\frac{1}{2}\bigl((r+t) M_{r+t}[u_0](0)+(r-t)M_{r-t}[u_0](0)\bigr)+\frac{1}{2}\int_{r-t}^{r+t} \rho M_\rho [u_1](0)\ud \rho\\
  +\frac{1}{2}\int_0^t \int_{r-(t-s)}^{r+(t-s)} \rho M_{s,\rho}[(\partial_t u)^2](0)\ud\rho  \ud s
\end{multline}
\emph{For data of compact support}, say contained in the ball $|r|\leq R$,
and any $(t,r)$ with $t>R+|r|$, $r>0$,
it then holds
\begin{equation}
  \label{eq:mean:lower}
  M_{t,r}[ru](0)\geq \frac{1}{2}\int_0^t \int_{|t-r-s|}^{r+t-s} \rho M_{s,\rho}^2[\partial_t u](0)\ud\rho  \ud s \,.
\end{equation}
Given that the integrand is non-negative it can be suitably restricted further.
Most importantly it is an integral in space \emph{and time},
over the spherical means of $\partial_t u$;
since by the compact support assumption $u$ vanishes for $r>R+t$,
the latter integral in time can be carried out,
and by the fundamental theorem of Calculus and Cauchy-Schwarz this leads to 
\begin{equation}
  \label{eq:mean:lower:calc}
  M_{t,r}[u](0)\geq \frac{1}{2} \int_{t-r}^r \tfrac{\rho}{r(R+t-r)} \bigl|M_{\rho+(t-r),\rho}[u](0)\bigr|^2 \ud\rho\,,
\end{equation}
for $(t,r)\in \Sigma_R=\{(t,r): R<R+r<t<2r\}$.
For a detailed derivation of this inequality we refer the reader to \cite{notes:const, notes:holzegel}.

This now shows that \emph{along a characteristic emanating} from $(t,r)$,
the quantity
\begin{equation}
  \label{eq:25}
  \beta(\rho)=\int_{-q}^{\rho}\sigma \bigl|M_{\sigma-q,\sigma}[u]\bigr|^2 \ud\sigma
\end{equation}
satisfies the differential inequality
\begin{equation}
  \label{eq:26}
  \beta'(\rho)=\rho \bigl|M_{\rho-q,\rho}[u]\bigr|^2\geq \frac{1}{4(R-q)^2\rho}\beta^2(\rho)\,.
\end{equation}
It is then easy to show that this quantity blows up, unless $M_{\rho-q,\rho}[u]=0$ for all $\rho\geq r$.

Note that the blow up here is at the level of the solution $u$ itself.
This is distinct from the blow-up argument in Section~\ref{sec:sign},
which yields blow up at the level of $\partial_t u$.

\section{Existence of the backwards solution}
\label{sec:backward}

Consider the scalar semilinear wave equation \eqref{eq:john:intro} in $\R^{1+3}$:
\fm{-\Box u=(\partial_t^2-\Delta) u=(\partial_tu)^2.}
We have seen in the previous section,
that according to the result of John \cite{MR0600571},
any nontrivial $C^3$ solution to \eqref{eq:john:intro} with compactly supported initial data blows up in finite time. In this section, however, we discuss a recent result in  \cite[Corollary 8.3]{yu2022nontrivial} on the construction of nontrivial future global solutions to \eqref{eq:john:intro}.
\begin{thm}[Yu '23] \label{thm:yu}
There exist a large family of nontrivial future global solutions to \eqref{eq:john:intro} for $t\geq 0$. Moreover, for each of these future global solutions, the solution and its time derivative cannot be both compactly supported at $t=0$.
\end{thm}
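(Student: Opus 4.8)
The plan is to establish the two assertions in turn: first the existence of a large family of nontrivial future global solutions, and then the obstruction ruling out compactly supported Cauchy data. For existence I would follow the asymptotic-system method of \cite{yu2022nontrivial}. The first step is to insert the wave-zone ansatz
\[
u(t,x)\approx \varepsilon r^{-1}U(s,q,\omega),\qquad s=\varepsilon\ln t,\ q=r-t,\ \omega=x/|x|,
\]
into \eqref{eq:john:intro} and extract the leading-order balance for $r\sim t$, $t\to\infty$. Since $\partial_t u\approx -\varepsilon r^{-1}U_q$ gives $(\partial_t u)^2\approx \varepsilon^2 r^{-2}U_q^2$, while the principal part of $-\Box(\varepsilon r^{-1}U)$ is $-2\varepsilon^2 r^{-2}U_{sq}$, the two match precisely to produce the asymptotic equation \eqref{eq:asymp:intro}, $2U_{sq}+U_q^2=0$. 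The next step is to solve \eqref{eq:asymp:intro} with data prescribed at $s=0$. Setting $V=U_q$ reduces it to the Riccati ODE $2V_s+V^2=0$ in $s$ for each fixed $(q,\omega)$, with solution
\[
V(s,q,\omega)=\frac{2V_0}{2+sV_0},\qquad V_0=U_q(0,q,\omega).
\]
Here the sign condition enters decisively: choosing $V_0\geq 0$ keeps the denominator $2+sV_0\geq 2$ for all $s\geq 0$, so that $V$, and after one $q$-integration $U$, is globally defined and smooth for $s\geq 0$ (whereas any $V_0<0$ forces blow up at $s=-2/V_0$). Letting $V_0$ range over smooth, suitably decaying nonnegative profiles yields the advertised large family.

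With a global profile $U$ in hand, the second main step is to promote it to an honest solution of \eqref{eq:john:intro}. I would assemble an approximate solution $u_{\mathrm{app}}$ from $\varepsilon r^{-1}U$ in the wave zone, cut off smoothly into the interior, and then write $u=u_{\mathrm{app}}+w$, so that $w$ solves an inhomogeneous wave equation whose source is the error generated by $u_{\mathrm{app}}$. The solution is constructed \emph{backwards in time}: the asymptotic data is carried entirely by $U$, one prescribes vanishing data for $w$ as $t\to\infty$, and integrates down to $t=0$, controlling $w$ by weighted energy estimates. I expect this last step to be the main obstacle — closing the energy estimate for the remainder so that $w$ stays small all the way to $t=0$ and the solution remains $\mathrm{C}^k$. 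This demands sharp control of the decay of the error terms: because the profile evolves on the slow logarithmic scale $s=\varepsilon\ln t$, the error must be integrable against this slow drift, which is the technical heart of \cite{yu2022nontrivial}, and for this note I would simply invoke that result.

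The second assertion is then a short argument by contradiction against John's theorem. Suppose some solution $u$ from the family above had both $u(0,\cdot)$ and $\partial_t u(0,\cdot)$ compactly supported. Since $u$ is nontrivial, future global, and of class $\mathrm{C}^k$ on $\R^3\times[0,\infty)$ with $k\geq 2$, Theorem~\ref{thm:john} applies and forces $u\equiv 0$, a contradiction; equivalently, the spherical-means mechanism of Section~\ref{sec:forward} would drive the differential inequality \eqref{eq:26} to blow up in finite forward time, contradicting future globality. Hence the data of these solutions cannot be simultaneously compactly supported. The only point needing care is regularity: John's statement is phrased for $\mathrm{C}^\infty_0$ data, but the argument uses only the representation formula \eqref{eq:mean:dalembert} and the inequality \eqref{eq:26}, which are valid for $\mathrm{C}^2$ (indeed $\mathrm{C}^3$) solutions, so it applies verbatim to the $\mathrm{C}^k$ solutions produced here.
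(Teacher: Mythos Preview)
Your proposal is correct and matches the paper's approach closely. For existence, the paper proceeds exactly as you outline: derive the asymptotic equation from the wave-zone ansatz, solve it globally by imposing the sign condition $A=U_q|_{s=0}\geq 0$, build a cut-off approximate solution $u_{\mathrm{app}}$, and solve a backward Cauchy problem for the remainder (with data vanishing for $t\geq 2T$, then sending $T\to\infty$ via a Cauchy-sequence argument), citing \cite{yu2022nontrivial} for the energy estimates.

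For the second assertion, your contradiction with Theorem~\ref{thm:john} is precisely one route the paper records (in a remark). It is worth noting, though, that the paper also supplies in Section~\ref{sec:yu:noncompact} an \emph{independent} proof that bypasses John's theorem entirely: from the explicit formula for $U$ and the pointwise remainder bound \eqref{eq:int} one obtains the lower bound $u(t,r\omega^0)\gtrsim \varepsilon\, t^{-1}(1+\varepsilon\ln t)^{-1}$ along rays with $r-t>q^0$ and $|r-t|\leq t^\gamma$, which contradicts the vanishing $u\equiv 0$ for $r-t\geq R_0$ that compactly supported data would force. This direct argument is self-contained and avoids the regularity bookkeeping you flag for John's result; your route is shorter but relies on that external input.
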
\rm

\rmk{\rm For each constant $C>0$, the function $-\ln(t+C)$ is a future global solution to \eqref{eq:john:intro}; cf.~Remark~\ref{rmk:john:example}. However, the future global solutions constructed in \cite{yu2022nontrivial} decay as $|x|\to\infty$ and thus are different from these constant-in-$x$ solutions. See, e.g., the estimates \eqref{eq:intder} below.}\rm

\rmk{\rm \label{thm:yu:decaya}To be more precise, in this paper, we prove that for each nonzero function $A(q,\omega)\in C_c^\infty(\R\times\mathbb{S}^2)$ with $A\geq 0$, there exists a nontrivial future global solution $u$ to \eqref{eq:john:intro}. The solution $u$ is associated to the function $A$ in the following way. At infinite time, the solution $u$ matches a future global solution $U(s,q,\omega)$ to the asymptotic equation \eqref{eqn:hormasy}. Meanwhile, this $U$ is uniquely determined by the data $U_q|_{s=0}=A$ and the boundary condition $\lim_{q\to-\infty}U(s,q,\omega)=0$.}\rm

\rmk{\rm Note that the second half of Theorem \ref{thm:yu} follows directly Theorem \ref{thm:john}. Meanwhile, we can also prove that the data of these nontrivial future global solutions are not compactly supported without applying John's finite time blowup result. See Section \ref{sec:yu:noncompact}.}\rm

\rmk{\rm\label{thm:yu:qlw} Theorem \ref{thm:yu} can be generalized. First, it was mentioned in Remark \ref{thm:yu:decaya} that we start with some function $A$ and construct a corresponding future global solution to \eqref{eq:john:intro}. The assumptions of $A$ can be relaxed. Instead of assuming $A\in C_c^\infty$, we only need to assume that $A$ along with its derivatives decays faster than certain negative powers of $\lra{q}$ as $|q|\to\infty$. 

Besides, the construction used in the proof of Theorem \ref{thm:yu} also applies to a large family of quasilinear wave equations in $\R^{1+3}$. 
We refer our readers to \cite{yu2022nontrivial} for more details. }
\rm



\subsection{H\"ormander's asymptotic equation}\label{sec:yu:asyeqn}
Let us discuss how the construction works. Here we make use of an asymptotic equation for \eqref{eq:john:intro} which was first introduced by H\"ormander \cite{MR897781,MR1120284,MR1466700}. In its derivation, we assume that we already have a future global solution $u=u(t,x)$ for $t\geq 0$ to \eqref{eq:john:intro} and that $u$ has a special form
\eq{u(t,x)=\eps r^{-1}U(s,q,\omega)}
where $s=\eps\ln t$, $r=|x|$, $\omega=x/|x|$, and $q=r-t$. Assuming that $t=r\to\infty$, we plug $u=\eps r^{-1}U$ into \eqref{eq:john:intro} and compare the coefficients of terms of order $\eps^2 t^{-2}$. This gives us the following  asymptotic equation for\eqref{eq:john:intro}:
\eq{\label{eqn:hormasy}2U_{sq}+U_q^2=0.}
We can solve this equation explicitly:
\eq{\label{eqn:hormasy:sol} U_q(s,q,\omega)&=\frac{2U_q(0,q,\omega)}{U_q(0,q,\omega)s+2}} 

\begin{rmk}
If  $U|_{s=0}\in C_c^\infty$ is nonzero, we have $U_q(0,q,\omega)<0$ for some $(q,\omega)$, so the denominator in \eqref{eqn:hormasy:sol} equals $0$ for some positive $s$. In this case, we have finite time blowup for the asymptotic equation \eqref{eqn:hormasy} when the data are $C_c^\infty$ and nonzero. 

We may compare this observation to Remark~\ref{rmk:john:example}.
Indeed the asymptotic equation \eqref{eqn:hormasy:sol} is precisely of the form \eqref{eq:john:example}.
In both cases the blow-up is sensitive to the sign of the initial data.

\end{rmk}

\subsection{Construction of nontrivial future global solution}\label{sec:yu:construction}
In our construction, we want to obtain a future global solution to \eqref{eqn:hormasy} for $s\geq 0$. To achieve this goal, we fix a function $A=A(q,\omega)$ such that $A\geq 0$ everywhere. For simplicity, in this note we also assume that $A\in C_c^\infty$. By setting $U_q(0,q,\omega)=A(q,\omega)$ in \eqref{eqn:hormasy:sol} and assuming $\lim_{q\to-\infty}U(s,q,\omega)=0$, we obtain a uniquely determined future global solution $U$ to \eqref{eqn:hormasy} defined by
\eq{\label{eqn:hormasy:sol2} U(s,q,\omega)&=\int_{-\infty}^q\frac{2A(\rho,\omega)}{A(\rho,\omega)s+2}\ d\rho,\qquad \forall (s,q,\omega)\in[0,\infty)\times\R\times\mathbb{S}^2.}
Since $A\in C_c^\infty$, if we choose $R>0$ so that $A\equiv 0$ for $|q|\geq R$, then we have $U\equiv 0$ whenever $q\leq -R$. 
Using this $U$, we obtain an approximate solution $u_{app}=u_{app}(t,x)$ as follows. Fix a small constant $\delta\in(0,1)$ and set
\eq{u_{app}(t,x)&=\eps r^{-1}\eta(t)\psi(r/t)U(\eps\ln t-\delta,r-t,\omega).}
Here $\eta=\eta(t)$ and $\psi=\psi(s)$ are two cutoff functions. We choose them so that $u_{app}\equiv \eps r^{-1}U$ in the region where $t\gtrsim 1$ and $|r-t|<t/4$ and that $u_{app}\equiv 0$ whenever $t\lesssim 1$ or $|r-t|>t/2$. And since $U$ vanishes for $q\leq -R$, we have $u_{app}\equiv 0$ whenever $r-t\leq -R$. In \cite[Section 4]{yu2022nontrivial},  it is shown that with this choice of  $u_{app}$,  we have for $\eps\ll1$ the pointwise estimates
\fm{|Z^Iu_{app}|\lesssim_I\eps (1+t)^{-1+C_I\eps} ,\qquad\forall I,\ \forall (t,x)\in[0,\infty)\times\R^3}
and $u_{app}$ is indeed an approximate solution to \eqref{eq:john:intro} in the following sense:
\fm{|Z^I(\Box u_{app}+(\partial_tu_{app})^2)|\lesssim_I\eps (1+t)^{-3+C_I\eps},\qquad \forall I,\  \forall (t,x)\in[0,\infty)\times\R^3.}
Here $Z^I$ denotes a product of $|I|$  commuting vector fields: translations $\partial_\alpha$ for $\alpha=0,1,2,3$, scaling $S=t\partial_t+r\partial_r$, rotations $\Omega_{ij}=x_i\partial_j-x_j\partial_i$ for $i,j=1,2,3$ and Lorentz boosts $\Omega_{0i}=x_i\partial_t+t\partial_i$ for $i=1,2,3$.

Next, we fix a large time $T>1$ and consider the following backward Cauchy problem
\eq{\label{backwardcauchy}-\Box v=(v_t+2\partial_tu_{app})v_t-\chi(t/T)(-\Box u_{app}-(\partial_tu_{app})^2)\qquad\text{ in }[0,\infty)\times\mathbb{R}^3}
with $v\equiv 0$ for $t\geq 2T$. Here $\chi\in C_c^\infty(-2,2)$ is a cutoff function such that $0\leq \chi\leq 1$ and $
\chi|_{[-1,1]}\equiv 1$. It is not hard to see that $u:=v+u_{app}$ solves \eqref{eq:john:intro} for $t\leq T$ and that $u^T\equiv u_{app}$ for $t\geq 2T$. In order to solve \eqref{backwardcauchy}, we make use of a continuity argument. Here, since the equation \eqref{eq:john:intro} is semilinear and since we choose $A\in C_c^\infty$, in our proof, we can use the unweighted energy $E(v)(t):=\norm{\partial v(t)}^2_{L^2(\R^3)}$ and the standard energy estimate for $\Box$. \footnote{In Remark \ref{thm:yu:qlw}, it was mentioned that nontrivial future global solutions can also be constructed under weaker assumptions on $A$ or for general quasilinear wave equations. To achieve these goals, we define new weighted energies and apply weighted energy estimates and Poincar$\acute{\rm e}$'s estimates.  All of these are motivated by, for example,  \cite{MR2382144,MR2134337,MR2680391}. We also refer \cite{yu2022nontrivial} for more details.} The details of the proof can be found in \cite[Section 6]{yu2022nontrivial}.
Finally, we show that the limit $v^\infty=\lim_{T\to\infty} v^T$ exists. To achieve this goal, we show that $\{v^T\}_{T>1}$ is a Cauchy sequence in suitable Banach spaces, which are related to the energy estimates used in this setting. We refer to \cite[Section 7]{yu2022nontrivial} for the details of the proof. In summary, for a fixed large integer $N$ and for all $\eps\ll_N1$, we obtain a future global $C^{N+1}$ solution  $u=u^\infty(t,x):=v^\infty(t,x)+u_{app}(t,x)$ to \eqref{eq:john:intro} for all $t\geq 0$ such that $u|_{r-t\leq -R}\equiv 0$ where $R>0$ is a constant such that $A\equiv $ for $|q|\geq R$.
Moreover, we have the following estimate in energy for the remainder
\eq{\label{est:uuappdiff}\sum_{|I|\leq N-1}\norm{\partial Z^I(u-u_{app})(t)}_{L^2(\R^3)}\lesssim_N\eps (1+t)^{-1/2+C_N\eps},\qquad \forall t\geq 0.}
We emphasize that these estimates imply in particular that the scattering solutions have finite energy on each time slice. This is because $u_{app}(t)$ is compactly supported for each fixed time $t\geq 0$. 
The estimates \eqref{est:uuappdiff} also imply using the Klainerman-Sobolev inequality the pointwise estimates
\eq{\label{eq:intder}\sum_{|I|\leq N-1}|\partial Z^I(u-u_{app})(t,x)|\lesssim_N\eps (1+t)^{-1/2+C_N\eps}(1+t+|x|)^{-1}(1+|t-|x||)^{-1/2},\qquad \forall t\geq 0.}
By integrating these pointwise estimates and noticing that $u|_{r-t\leq -R}\equiv 0$, we also obtain pointwise bounds for $Z^I(u-u_{app})$.

\subsection{Proof that the data are not compactly supported}\label{sec:yu:noncompact}

We finally remark that the future global solution $u$ constructed above cannot have compactly supported initial data at $t=0$ as long as $A\not\equiv 0$. In fact, the pointwise bound for $\partial (u-u_{app})$ above shows that
\eq{\label{eq:int}|u-u_{app}|\lesssim \eps (1+t)^{-3/2+C\eps}(1+|t-|x||)^{1/2},\qquad \text{whenever }|x|<2t.}
Recall that $u_{app}=\eps r^{-1}U$ in the region $|r-t|\leq t/4$, so for each $\gamma\in(0,1)$,  we have
\fm{|u(t,x)-\eps r^{-1}U(\eps\ln t-\delta,r-t,\omega)|\lesssim \eps t^{-(3+\gamma)/2+C\eps},\qquad \forall t\geq 1,\ |t-|x||\leq t^{\gamma}.}
The power of $t$ is less than $-1$ for $\eps\ll_\gamma1$. Meanwhile, if $A(q^0,\omega^0)>0$ for some $(q^0,\omega^0)$, we have for all $q>q^0$,
\fm{U(s,q,\omega^0)&=\int_{-\infty}^q\frac{2A(\rho,\omega^0)}{A(\rho,\omega^0)s+2}\ d\rho\geq \int_{-\infty}^{q^0}\frac{2A(\rho,\omega^0)}{A(\rho,\omega^0)s+2}\ d\rho\gtrsim_{q^0,\omega^0} (1+s)^{-1}.}
As a result, for all $t\gtrsim 1$, $r-t>q^0$, and $|r-t|\leq t^\gamma$ with $\gamma\in(0,2/3)$, we have
\fm{u(t,r\omega^0)\geq C_{q^0,\omega^0}^{-1}\eps r^{-1}\cdot (1+\eps\ln t)^{-1}-C\eps t^{-(3+\gamma)/2+C\eps}\gtrsim_{q^0,\omega^0,\gamma} \frac{\eps}{t(\eps\ln t+1)}.}

However, if the data of $u$ is compactly supported, we have $u\equiv 0$ whenever $r-t\geq R_0$ for some constant $R_0>0$. Note that the intersection of $\{r-t\geq R_0\}$ and $\{t>1,r-t>q^0,|r-t|<t^\gamma\}$ is not empty for all sufficiently large $t$, so here is a contradiction. We conclude that the solution $u$ constructed above cannot have compactly supported initial data.

\section{Blowup criteria for solutions with non-compactly supported initial data}\label{sec:sign}
In this section we give sufficient conditions for the blow up in finite time of solutions to the semilinear wave equation
\begin{equation}\label{eq:wavejohn}
\square u = -(\partial_t u)^2.
\end{equation}
These conditions amount to assumptions on the sign of the solution to the linear wave equation with the same initial data, as well as its time derivative. Unlike previous proofs of blowup for solutions to \eqref{eq:wavejohn}, including John's original proof in \cite{MR0600571}, we do not assume that the data is compactly supported. In fact, Theorem \ref{thm:bernhardt} shows that a large class of finite-energy solutions blow up in finite time. We note that the sign conditions that imply blowup, namely \eqref{eq:signconditionforward}, \eqref{eq:signconditionbackward} in Lemma \ref{lem:signconditions}, constitute sufficient conditions for solutions to \eqref{eq:wavejohn} to blow up in finite positive time or finite negative time respectively.

First we show that, under suitable decay assumptions on our initial data, solutions to the linear equation satisfy one of three sign conditions. The proof of Lemma \ref{lem:signconditions} (and of Theorem \ref{thm:bernhardt}) rely heavily on taking spherical means of solutions, and so we introduce the following notation. Given a function $f\in C(\R^3)$, we denote the spherical mean of radius $r \geq 0$ of $f$ about the point $x_0 \in \R^3$ as
\begin{equation}\label{eq:sphmeandef}
M_{x_0}[f](r) = \frac{1}{4\pi}\int_{|\xi|=1}f(x_0+r\xi)\ud \sigma(\xi).
\end{equation}

\begin{lem}\label{lem:signconditions}
Let $u_0,u_1 \in C^2(\R^3)$ such that
\begin{equation}\label{eq:datadecay}
\lim_{r\rightarrow\infty}\partial_r(ru_0) = \lim_{r\rightarrow \infty} ru_1 = 0,
\end{equation}
where $r = |x|$. Assume one of $u_0,u_1$ are nonzero. Let $u_{\text{lin}}$ be the unique solution to the linear wave equation
\begin{equation*}
\square u_{\text{lin}} = 0
\end{equation*}
with initial data $u_{\text{lin}}(0,x) = u_0(x)$, $\partial_t u_{\text{lin}}(0,x) = u_1(x)$. Then $u_{\text{lin}}$ satisfies at least one of the following three sign conditions:
\begin{enumerate}
\item There exists $x_0 \in \R^3$, $q \geq 0$, $r_0 \geq q$, such that
\begin{equation}\label{eq:signconditionforward}
M_{x_0}[\partial_t u_{\text{lin}}(r-q)](r) > 0
\end{equation}
for all $r \geq r_0$.
\item There exists $x_0 \in \R^3$, $q \geq 0$, $r_0 \geq q$ such that
\begin{equation}\label{eq:signconditionbackward}
M_{x_0}[\partial_t u_{\text{lin}}(-(r-q))](r) < 0,
\end{equation}
for all $r \geq r_0$.
\item For all $t\in \R,x \in \R^3$, we have
\begin{equation}\label{eq:signcondition3}
u_{\text{lin}}(t,x) \geq 0.
\end{equation}
\end{enumerate}
\end{lem}

\begin{proof}
We write explicitly
\begin{multline*}
M_{x_0}[u_{\text{lin}}(t)](r) = \frac{1}{2r}\Big((r+t)M_{x_0}[u_0](r+t) + (r-t)M_{x_0}[u_0](r-t)\Big)\\
+ \frac{1}{2r}\int_{r-t}^{r+t}\rho M_{x_0}[u_1](\rho)\ud\rho.
\end{multline*}
Differentiating in $t$, we have
\begin{multline*}
M_{x_0}[\partial_t u_{\text{lin}}(t)](r) = \frac{1}{2r}\Big(\partial_r(rM_{x_0}[u_0])(r+t) -\partial_r(rM_{x_0}[u_0])(r-t)\Big)\\
+ \frac{1}{2r}\Big( (r+t)M_{x_0}[u_1](r+t) + (r-t)M_{x_0}[u_1](r-t)\Big).
\end{multline*}
The right hand side can be written entirely in terms of the solution $u_{\text{lin}}$ as
\begin{equation}\label{eq:sphericalmeanssol}
M_{x_0}[\partial_t u_{\text{lin}}(t)](r) = \frac{1}{2r}\Big(u_{\text{lin}}(r+t,x_0) - u_{\text{lin}}(-(r-t),x_0)\Big).
\end{equation}
One may see this from Kirchhoff's formula for $u_{\text{lin}}$:
\begin{align*}u_{\text{lin}}(t,x_0) &= \frac{1}{4\pi t^2}\int_{\partial B_t(x_0)} \partial_r (ru_0)(t,y) + tu_1(t,y) \ud \sigma(y)\\
&= \frac{1}{4\pi}\int_{|\xi| = 1} \partial_r (ru_0)(t,x_0+t\xi) + tu_1(t,x_0+t\xi) \ud \sigma(\xi)\\
&= \partial_r (rM_{x_0}[u_0])(t) + tM_{x_0}[u_1](t),
\end{align*}
where the radial coordinate $r$ is centered at $x_0$. Letting $q = r-t$, we write \eqref{eq:sphericalmeanssol} as
\begin{equation*}
M_{x_0}[\partial_t u_{\text{lin}}(r-q)](r) = \frac{1}{2r}\Big(u_{\text{lin}}(2r-q,x_0) - u_{\text{lin}}(-q,x_0)\Big).
\end{equation*}
We now claim that the first sign condition \eqref{eq:signconditionforward} is satisfied if there exists some $x_0 \in \R^3$, $q \geq 0$ such that $u_{\text{lin}}(-q,x_0) < 0$. The decay condition \eqref{eq:datadecay} implies $u_{\text{lin}}(2r-q) \rightarrow 0$ as $r \rightarrow \infty$, since
\begin{equation*}
u_{\text{lin}}(2r-q,x_0) = \partial_r(rM_{x_0}[u_0])(2r-q) + (2r-q)M_{x_0}[u_1](2r-q).
\end{equation*}
and so if $\partial_r(ru_0),ru_1 \rightarrow 0$ as $r \rightarrow \infty$, so do their spherical means, therefore there exists some $r_0 \geq q$ such that 
\begin{equation*}
|u_{\text{lin}}(2r-q,x_0)| < \frac{1}{2}|u_{\text{lin}}(-q,x_0)|
\end{equation*}
for all $r \geq r_0$. It follows that
$$M_{x_0}[\partial_t u_{\text{lin}}(r-q)](r) > -\frac{1}{2}u_{\text{lin}}(-q,x_0) > 0$$
for all $r \geq r_0$, and thus \eqref{eq:signconditionforward} is satisfied. Similarly, from \eqref{eq:sphericalmeanssol} we have
\begin{equation*}
M_{x_0}[\partial_t u_{\text{lin}}(-(r-q))](r) = \frac{1}{2r}\Big(u_{\text{lin}}(q,x_0) - u_{\text{lin}}(-(2r - q),x_0)\Big),
\end{equation*}
and so by the same argument the second sign condition \eqref{eq:signconditionbackward} is satisfied if there exists some $x_0 \in \R^3$, $q \geq 0$ such that $u_{\text{lin}}(q,x_0) < 0$. It follows that if neither of \eqref{eq:signconditionforward},\eqref{eq:signconditionbackward} hold, then the third sign condition \eqref{eq:signcondition3} is necessarily satisfied.
\end{proof}

\rmk{\rm \label{rmk:signconditionnontrivial}We point out that the decay condition \eqref{eq:datadecay} is satisfied by all continuous differentiable initial data with finite energy. Moreover, for each sign condition, there are finite-energy solutions to the homogeneous wave equation that satisfy that one sign condition and not the other two. For example, consider the solution $u_{\text{lin}}$ with initial data
$$u_{\text{lin}}(0,x) = 0,\quad \partial_t u_{\text{lin}}(0,x) = \chi(x),$$
where $\chi \in C_0^\infty(\R^3)$ is some nonnegative bump function. Then for all $t \geq 0,x\in\R^3$, we have
$$u_{\text{lin}}(-t,x) = -tM_{x}[\chi](t),$$
and
$$\partial_t u_{\text{lin}}(t,x) = \partial_t u_{\text{lin}}(-t,x) =  M_{x}[\chi](t).$$
Therefore $\partial_t u_{\text{lin}} \geq 0$ everywhere and $u_{\text{lin}} \leq 0$ for all negative $t$, and so $u_{\text{lin}}$ satisfies the first sign condition \eqref{eq:signconditionforward}, and not the other two. Similarly, the solution $v_{\text{lin}}(t,x) = -u_{\text{lin}}(t,x)$ satisfies the second sign condition \eqref{eq:signconditionbackward} only. For the final sign condition, we consider the solution $u_{\text{lin}}$ with initial data
$$u_{\text{lin}}(0,x) = \frac{1}{(1+|x|^2)^{1/2}},\quad \partial_t u_{\text{lin}}(0,x) = 0.$$
Then we have
$$u_{\text{lin}}(t,x) = \frac{1}{2|x|}\Big(\frac{|x|+t}{(1+|x+t|^2)^{1/2}} + \frac{|x|-t}{(1+|x-t|^2)^{1/2}}\Big).$$
Then $u_{\text{lin}} \geq 0$ everywhere, and one can show that for all $t \geq 0$, we have
$$\partial_t u_{\text{lin}}(t,x) \leq 0 \leq \partial_t u_{\text{lin}}(-t,x).$$
Hence $u_{\text{lin}}$ satisfies only the third sign condition \eqref{eq:signcondition3}.}\rm

We now show that the first two sign conditions \eqref{eq:signconditionforward}, \eqref{eq:signconditionbackward} are sufficient conditions for finite-time blowup.
\begin{thm}[Bernhardt '24]
\label{thm:bernhardt}
Suppose $u$ is a nonzero solution of the initial value problem \eqref{eq:12} with initial data $u(0,x) = u_0(x)$, $\partial_t u(0,x) = u_1(x)$. Suppose $u_0,u_1 \in C^2(\R^{3})$ satisfy the decay assumption \eqref{eq:datadecay}, and let $u_{\text{lin}}$ be the solution to the homogeneous wave equation $\square u_{\text{lin}} = 0$ with matching initial data $u_0,u_1$. If $u_{\text{lin}}$ satisfies the sign condition \eqref{eq:signconditionforward}, then $\partial_t u \rightarrow \infty$ in finite positive time. Alternatively, if $u_{\text{lin}}$ satisfies \eqref{eq:signconditionbackward}, then $\partial_t u \rightarrow -\infty$ in finite negative time.
\end{thm}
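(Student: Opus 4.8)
The plan is to run a spherical-means argument in the spirit of Section~\ref{sec:forward}, but tracking $\partial_t u$ rather than $u$, and to use the sign condition only to ignite an ODE blow-up. I would argue by contradiction, assuming a $C^2$ solution exists for all $t\geq 0$ (resp.\ $t\leq 0$ for the backward case). First I would fix the center $x_0$ furnished by \eqref{eq:signconditionforward} and set $w(t,r)=rM_{x_0}[u(t,\cdot)](r)$. By the reduction \eqref{eq:15}, $w$ solves the $1+1$ wave equation $\partial_t^2 w-\partial_r^2 w=G$ on $r\geq 0$ with $w(t,0)=0$, where $G(t,r)=rM_{x_0}[(\partial_t u)^2(t,\cdot)](r)\geq 0$. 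The essential nonlinear input is the pointwise bound coming from Cauchy--Schwarz on the sphere, namely $G(t,r)=rM_{x_0}[(\partial_t u)^2](r)\geq r\,(M_{x_0}[\partial_t u](r))^2=(\partial_t w)^2/r$, since $\partial_t w=rM_{x_0}[\partial_t u]$.

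Next I would extend $w$ and $G$ oddly in $r$, apply d'Alembert's formula exactly as in \eqref{eq:mean:dalembert}, and differentiate in $t$ to obtain
\[
\partial_t w(t,r)=\partial_t w_{\text{lin}}(t,r)+\tfrac12\int_0^t\bigl[G(s,r+(t-s))+G(s,r-(t-s))\bigr]\,\ud s,
\]
where $w_{\text{lin}}=rM_{x_0}[u_{\text{lin}}]$ is the linear evolution of the same data. The key observation, and what makes the estimate close on itself, is to restrict to the outgoing characteristic $t=r-q$ and set $\Phi(r):=\partial_t w(r-q,r)=rM_{x_0}[\partial_t u(r-q)](r)$. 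Along this ray the incoming source point $(s,r-(t-s))=(s,q+s)$ again satisfies $s-(q+s)=-q$, so it lies on the very same characteristic; hence $G(s,q+s)\geq \Phi(q+s)^2/(q+s)$. Dropping the nonnegative outgoing term and substituting $\rho=q+s$ gives the self-referential inequality
\[
\Phi(r)\ \geq\ L(r)+\tfrac12\int_q^r\frac{\Phi(\rho)^2}{\rho}\,\ud\rho,\qquad L(r):=rM_{x_0}[\partial_t u_{\text{lin}}(r-q)](r),
\]
and by the sign condition \eqref{eq:signconditionforward} we have $L(r)>0$, hence $\Phi(r)\geq L(r)>0$, for all $r\geq r_0$.

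Finally I would set $\beta(r)=\tfrac12\int_q^r \Phi(\rho)^2/\rho\,\ud\rho$, so that $\beta(r_1)>0$ for any $r_1>r_0$ and $\beta'(r)=\Phi(r)^2/(2r)\geq \beta(r)^2/(2r)$. Integrating $(-1/\beta)'\geq 1/(2r)$ from $r_1$ gives $1/\beta(r)\leq 1/\beta(r_1)-\tfrac12\ln(r/r_1)$, which forces $\beta\to+\infty$ at the finite radius $r_\ast=r_1 e^{2/\beta(r_1)}$, contradicting the finiteness of $\Phi$ (hence of $\beta$) for a $C^2$ solution. Thus $\sup\Phi=+\infty$ is reached at some radius $\leq r_\ast$; since $M_{x_0}[\partial_t u(r-q)](r)=\Phi(r)/r$ is a nonnegative average over the sphere, $\partial_t u\to+\infty$ at a finite positive time $t\leq r_\ast-q$. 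The backward statement \eqref{eq:signconditionbackward} would then follow by applying this verbatim to $\tilde u(t,x)=u(-t,x)$: it solves the same equation with data $(u_0,-u_1)$, its linearization satisfies \eqref{eq:signconditionforward} by \eqref{eq:sphericalmeanssol}, and the resulting blow-up $\partial_t\tilde u\to+\infty$ translates into $\partial_t u\to-\infty$ in finite negative time.

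I expect the main obstacle to be not the ODE endgame but justifying the geometric fact that closes the inequality, namely that the incoming leg of the backward characteristic cone from $(r-q,r)$ coincides with the ray $t-r=-q$ itself, so the accumulated nonlinear source feeds directly back onto $\Phi$. Making this precise requires care with the odd extension and the Dirichlet condition $w(t,0)=0$ on the half-line; one should check that the two source radii $\rho=q+s\in[q,r]$ and $\rho=2r-q-s\in[r,2r-q]$ remain in the physical region $\rho\geq0$, which they do, so that $G\geq 0$ and the Cauchy--Schwarz bound both apply there. A secondary but genuine subtlety is that \eqref{eq:signconditionforward} must hold for \emph{all} $r\geq r_0$, not merely at a single radius, precisely so that $L$ stays positive along the entire ray and renders $\beta(r_1)$ strictly positive; this is exactly why the sign condition is formulated on a whole outgoing characteristic.
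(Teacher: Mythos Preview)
Your proposal is correct and follows essentially the same route as the paper: both reduce via spherical means to a $1+1$ Duhamel formula, differentiate in $t$, drop the nonnegative far-side contribution, apply Cauchy--Schwarz on the sphere to close a self-referential inequality along the outgoing characteristic $r-t=q$, and finish with the Riccati-type ODE blow-up for the accumulated integral (your $\beta$ is the paper's $N/2$, yielding the same blow-up radius $r_\ast=r_0\exp(4/N(r_0))=r_1\exp(2/\beta(r_1))$); the backward case is likewise handled by time reversal $v(t)=u(-t)$. The only cosmetic differences are that you track $\Phi=rM_{x_0}[\partial_t u]$ rather than $M_{x_0}[\partial_t u]$ and differentiate d'Alembert directly rather than first Fubini-splitting the triangle, but these are equivalent computations.
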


\begin{proof}
First, we assume that the first sign condition \eqref{eq:signconditionforward} holds for some $x_0 \in \R^3, q \geq 0, r_0 \geq q$. The spherical mean $M_{x_0}[u]$ satisfies Darboux's equation on $\R^3$:
\begin{equation*}
\partial_r^2M_{x_0}[u] + \frac{2}{r}\partial_rM_{x_0}[u] = \Delta_x M_{x_0}[u],
\end{equation*}
and so by \eqref{eq:wavejohn} $M_{x_0}[u]$ satisfies the $\R^{1+1}$ nonlinear wave equation
\begin{equation*}
-\partial_t^2 (rM_{x_0}[u]) + \partial_r^2 (rM_{x_0}[u]) = -rM_{x_0}[(\partial_t u)^2].
\end{equation*}
For all $t,r \geq 0$ such that $r \geq t$, the Duhamel formula for $M_{x_0}[u]$ is
\begin{equation}\label{eq:duham}
M_{x_0}[u(t)](r) = M_{x_0}[u_{\text{lin}}(t)](r) + \frac{1}{2r}\int_{T(t,r)}\rho M_{x_0}[(\partial_t u(s))^2](\rho)\ud \rho \ud s,
\end{equation}
where $u_{\text{lin}}$ is the solution to the homogeneous equation $\square u_{\text{lin}} = 0$ with the same initial data $u_0,u_1$, and $T(t,r)$ is the triangle with corners $(0,r-t),(t,r),(0,t+r)$. We split the integral of $M[(\partial_t\phi)^2]$ over $T(t,r)$ in \eqref{eq:duham} into parts $\rho \leq r$, and $\rho \geq r$, so that
\begin{multline}\label{eq:triintsplit}
\int_{T(t,r)}\rho M[(\partial_t u(s))^2](\rho)\ud s \ud \rho = \int_{r-t}^r\int_{0}^{\rho-(r-t)} \rho M[(\partial_t u(s))^2](\rho)\ud s \ud \rho \\+ \int_{r}^{r+t}\int_{0}^{r+t-\rho} \rho M[(\partial_t u(s))^2](\rho)\ud s \ud \rho.
\end{multline}
We then differentiate \eqref{eq:duham} with respect to $t$, which by \eqref{eq:triintsplit} gives
\begin{multline}\label{eq:dtduham}
M_{x_0}[\partial_t u(t)](r) = M_{x_0}[\partial_t u_{\text{lin}}(t)](r) + \frac{1}{2r}\int_{r-t}^r\rho M[(\partial_t u(\rho-(r-t)))^2](\rho) \ud \rho\\ + \frac{1}{2r}\int_r^{r+t}\rho M[(\partial_t u(r+t-\rho))^2](\rho) \ud \rho.
\end{multline}
Clearly the second integral in \eqref{eq:dtduham} is nonnegative, moreover we have by Cauchy Schwarz that
\begin{equation*}
(M_{x_0}[f](r))^2 \leq M_{x_0}[f^2](r),
\end{equation*}
and so from \eqref{eq:dtduham} we obtain the lower bound
\begin{equation*}
M_{x_0}[\partial_t u(r-q)](r) \geq M_{x_0}[\partial_t u_{\text{lin}}(r-q)](r) + \frac{1}{2r}\int_{q}^r\rho \Big(M[\partial_t u(\rho-q)](\rho)\Big)^2 \ud \rho,
\end{equation*}
where we set $t = r-q$. Given that the first sign condition \eqref{eq:signconditionforward} is satisfied, we have
\begin{equation}\label{eq:dtduhamnolin}
M_{x_0}[\partial_t u(r-q)](r) \geq \frac{1}{2r}\int_{q}^r\rho \Big(M[\partial_t u(\rho-q)](\rho)\Big)^2 \ud \rho
\end{equation}
for all $r \geq r_0$. Define the function
$$N(r) = \int_{q}^r\rho \Big(M[\partial_t u(\rho-q)](\rho)\Big)^2 \ud \rho.$$
By \eqref{eq:dtduhamnolin}, the following ODE inequality holds for all $r \geq r_0$:
\begin{align}
N'(r) &= r\Big(M[\partial_t u(r-q)](r)\Big)^2\nonumber\\
&\geq r\Big(\frac{1}{2r}\int_{q}^r\rho \Big(M[\partial_t u(\rho-q)](\rho)\Big)^2 \ud \rho\Big)^2\nonumber\\
&= \frac{1}{4r}N^2(r).\label{eq:odeineq}
\end{align}
We then integrate this inequality on the interval $[r_0,r]$, giving
\begin{equation}\label{eq:blowupode}\frac{1}{N(r_0)}-\frac{1}{N(r)} \geq \frac{1}{4}\log\Big(\frac{r}{r_0}\Big).
\end{equation}
Since $u_0,u_1 \in C^2(\R^3)$, the property \eqref{eq:signconditionforward} implies that there exists some small $\epsilon > 0$ such that $M_{x_0}[\partial_t u_{\text{lin}}(r-q)](r) > 0$ for all $r \in [r_0-\epsilon,r_0]$, and so
\begin{align*}
N(r_0) &=  \int_{q}^{r_0}\rho \Big(M[\partial_t u(\rho-q)](\rho)\Big)^2 \ud \rho\\
&\geq \int_{r_0-\epsilon}^{r_0}\rho \Big(M[\partial_t u_{\text{lin}}(\rho-q)](\rho)\Big)^2 \ud \rho > 0.
\end{align*}
It then follows from \eqref{eq:odeineq} that $N(r) \geq N(r_0) > 0$, and so we rearrange \eqref{eq:blowupode} to obtain
\begin{equation}
N(r) \geq \frac{N(r_0)}{N(r_0)-\frac{1}{4}\log(r/r_0)},
\end{equation}
Therefore $N(r) \rightarrow +\infty$ for some $r \leq r_*$, where $r_*$ is the value for which the right hand side blows up to infinity, given by
\begin{equation}
r_* = r_0\exp\Big(\frac{4}{N(r_0)}\Big).
\end{equation}
By \eqref{eq:dtduham}, this implies that the spherical mean $M_{x_0}[\partial_t u(r-q)](r)$ blows up, and hence $\partial_t u \rightarrow \infty$ at some point $(t,x) \in (0,\infty)\times \R^3$.

If alternatively \eqref{eq:signconditionbackward} is satisfied, then observe that the function $v(t) = u(-t)$ also satisfies the semilinear wave equation \eqref{eq:wavejohn} with initial data
$$v(0,x) = u_0(x),\quad \partial_t v(0,x) = -u_1(x).$$
Let $v_{\text{lin}}(t)$ denote the solution to the homogeneous wave equation with the same initial data as $v$. By uniqueness, we have for all $t\in \R$ that $v_{\text{lin}}(t) = u_{\text{lin}}(-t)$. As $\partial_t v_{\text{lin}}(t) = -\partial_t u_{\text{lin}}(-t)$, it is clear that if $u_{\text{lin}}$ satisfies the second sign condition, then $v_{\text{lin}}$ satisfies the first sign condition. From the previous argument, $\partial_t v \rightarrow \infty$ in finite positive time , and so since $\partial_t u(t) = -\partial_t v(-t)$, it follows that $\partial_t u \rightarrow -\infty$ in finite negative time.
    
\end{proof}

Finally we show that the solutions constructed in \cite{yu2022nontrivial} do not satisfy the third sign condition \eqref{eq:signcondition3}, and therefore blow up in finite negative time.
\begin{crl}\label{crl:thm2solblowup}
Let $u$ be a $C^2$ solution to \eqref{eq:wavejohn} constructed in Theorem \ref{thm:yu}, and let $u_{\text{lin}}$ be the solution to the homogeneous wave equation $\square u_{\text{lin}} = 0$ with the same initial data as $u$. Then $u_{\text{lin}}$ does not satisfy the third sign condition \eqref{eq:signcondition3}. That is, there exists some $t \in \R, x \in \R^3$ such that $u_{\text{lin}}(t,x)<0$. Moreoever, $\partial_t u$ blows up in finite negative time.
\end{crl}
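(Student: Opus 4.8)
The plan is to combine the nonlinear Duhamel identity \eqref{eq:duham} with the pointwise bounds from the construction to exhibit a point at which $u_{\text{lin}}$ is strictly negative, and then to invoke Lemma~\ref{lem:signconditions} and Theorem~\ref{thm:bernhardt}. Rewriting \eqref{eq:duham} as
\[
M_{x_0}[u_{\text{lin}}(t)](r) = M_{x_0}[u(t)](r) - \frac{1}{2r}\int_{T(t,r)}\rho\, M_{x_0}[(\partial_t u(\tau))^2](\rho)\,\ud\rho\,\ud\tau ,
\]
and noting that the subtracted integral is nonnegative, it suffices to find $x_0\in\R^3$, $t\ge 0$ and $r\ge t$ for which the nonlinear integral strictly dominates $M_{x_0}[u(t)](r)$. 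Because the negative point produced this way sits at time $t\ge 0$, the second sign condition \eqref{eq:signconditionbackward} will hold with $q=t$, and Theorem~\ref{thm:bernhardt} then yields $\partial_t u\to-\infty$ in finite negative time. Consistency with Theorem~\ref{thm:yu} — the solution is future global, hence admits no forward blow up — confirms that it is \eqref{eq:signconditionbackward} and not \eqref{eq:signconditionforward} that is triggered.

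To carry this out I would fix a direction $\omega^0$ and a value $q^0$ with $A(q^0,\omega^0)>0$, choose the center $x_0=-d\,\omega^0$ with $d>0$ large enough that $d+q^0>0$, and set $r=t+(d+q^0)$, letting $t\to\infty$ along this characteristic; the shift by $d$ is needed only to guarantee $r\ge t$ when $\operatorname{supp}A\subset\{q<0\}$, and for $q^0\ge 0$ one may simply take $x_0=0$. Multiplying the displayed identity by $2r$, the plan is to show that the two terms have different limits. First, $2r\,M_{x_0}[u(t)](r)\lesssim (\ln t)^{-1}\to 0$, using the pointwise upper bound $0\le u\lesssim (|x|\ln \tau)^{-1}$ near the light cone that follows from $U\ge 0$ being bounded, together with \eqref{eq:intder} to absorb $u-u_{app}$. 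Second, $\int_{T(t,r)}\rho\, M_{x_0}[(\partial_t u(\tau))^2](\rho)\,\ud\rho\,\ud\tau$ converges to a strictly positive, finite limit: near the cone $\partial_t u\approx -\eps r^{-1}U_q$ with $U_q=2A/(As+2)\gtrsim(\ln\tau)^{-1}$ on the directions where $A>0$, so $(\partial_t u)^2\gtrsim \tau^{-2}(\ln\tau)^{-2}$ on a fixed solid-angle patch of each sphere (the error $\partial(u-u_{app})$ being negligible by \eqref{eq:intder}); averaging the nonnegative integrand over the sphere gives $M_{x_0}[(\partial_t u(\tau))^2](\rho)\gtrsim \tau^{-2}(\ln\tau)^{-2}$, and the resulting integral $\int^\infty \tau^{-1}(\ln\tau)^{-2}\,\ud\tau$ converges to a positive constant. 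Hence $2r\,M_{x_0}[u_{\text{lin}}(t)](r)$ tends to a strictly negative limit, so $M_{x_0}[u_{\text{lin}}(t)](r)<0$ for all large $t$, forcing $u_{\text{lin}}(t,y)<0$ at some $y$ on the sphere $|y-x_0|=r$.

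The hard part will be the quantitative balancing in the second paragraph: one must show \emph{simultaneously} that the rescaled linear spherical mean vanishes in the limit and that the total nonlinear flux along the chosen characteristic is strictly positive and finite. Both rely essentially on the construction — the decay $u\lesssim (|x|\ln\tau)^{-1}$ and the matching lower bound $|\partial_t u|\gtrsim (\tau\ln\tau)^{-1}$ near the cone — and on the hypothesis $A\ge 0$, $A\not\equiv 0$, which is precisely the sign condition that makes $U$ (hence $u$) nonnegative and that prevents forward blow up. A secondary technical point is the placement of the center $x_0$ so that the domain-of-dependence triangle $T(t,r)$ captures a full semi-infinite segment of the characteristic through $\operatorname{supp}A$ while respecting the constraint $r\ge t$ under which \eqref{eq:duham} is valid. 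Once $u_{\text{lin}}<0$ is established at a point with $t\ge 0$, the remaining implications are immediate from Lemma~\ref{lem:signconditions} and Theorem~\ref{thm:bernhardt}.
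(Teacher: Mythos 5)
Your proposal is correct, but it establishes the key claim --- that $u_{\text{lin}}$ fails the third sign condition \eqref{eq:signcondition3} --- by a genuinely different route than the paper. The paper argues by contradiction and uses only soft information: assuming $u_{\text{lin}}\ge 0$ everywhere, it combines Kirchhoff's formula at times $\pm t$ to get $\partial_r(rM_{x}[u_0])(t)\ge |t M_{x}[u_1](t)|$ (its \eqref{eq:53}), feeds this into the d'Alembert formula to obtain $M_{x_0}[u_{\text{lin}}(r-q)](r)\ge C/r$ along a characteristic where $M_{x_0}[u_0](q)>0$, transfers that lower bound to $u$ via positivity of the Duhamel integral, and contradicts the upper bound \eqref{eq:sol:decayupper}; no lower bound on the nonlinear term is ever needed. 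You instead argue directly: rearranging \eqref{eq:duham}, you show $2rM_{x_0}[u(t)](r)\to 0$ while the nonlinear flux over $T(t,r)$ stays bounded below by a positive constant, forcing $M_{x_0}[u_{\text{lin}}(t)](r)<0$ at large positive times. This needs more from the construction: besides the upper bound $|u|\lesssim (t\ln t)^{-1}$ near the cone, you need the lower bound $|\partial_t u|\gtrsim \eps\,\tau^{-1}(\eps\ln\tau+1)^{-1}$ where $A>0$ (valid, since $\eps r^{-1}U_q$ with $U_q=2A/(As+2)$ dominates both the $U_s$ term and the remainder controlled by \eqref{eq:intder}), and the geometric fact that with $x_0=-d\,\omega^0$ and $\rho-\tau$ close to $d+q^0$ the spheres centered at $x_0$ meet the shell $\{|y|-\tau\approx q^0\}$ in a patch of solid angle $\sim \Delta/d$ whose directions lie within $O(\sqrt{\Delta/d})$ of $\omega^0$; both claims check out, and the resulting flux $\sim\eps^2\int^\infty \tau^{-1}(\eps\ln\tau+1)^{-2}\,\ud\tau$ is indeed a finite positive constant. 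What your route buys: it is constructive (it exhibits where $u_{\text{lin}}<0$, namely on spheres along the outgoing characteristic at all large times, with magnitude $\gtrsim 1/r$), and it triggers \eqref{eq:signconditionbackward} directly --- via the implication, established inside the proof of Lemma \ref{lem:signconditions}, that $u_{\text{lin}}(q,x_0)<0$ for some $q\ge 0$ forces the second condition --- rather than excluding \eqref{eq:signconditionforward} through future globality as the paper does. What the paper's route buys is brevity and economy of hypotheses.

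Two small corrections. First, your assertion $0\le u$ is not justified: only $u_{app}\ge 0$ is known, and the remainder $v$ has no sign; but this is also not needed, since only $2r\,\bigl|M_{x_0}[u(t)](r)\bigr|\to 0$ enters the argument. Second, finiteness of the flux integral is unnecessary: a positive lower bound on the liminf suffices, since divergence to $+\infty$ would only make $M_{x_0}[u_{\text{lin}}(t)](r)$ more negative, so the ``simultaneous balancing'' you flag as the hard part is in fact one-sided.
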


\begin{proof}
     We begin by stating that if $u_{\text{lin}}$ does not satisfy \eqref{eq:signcondition3}, then by Lemma \ref{lem:signconditions}, $u_{\text{lin}}$ satisfies either of the first two sign conditions \eqref{eq:signconditionforward},\eqref{eq:signconditionbackward}. It follows by Theorem \ref{thm:bernhardt} that $u$ blows up in finite negative time, since $u$ is a future global solution. To show $u_{\text{lin}}$ does not satisfy the third sign condition, we proceed by contradiction. Suppose $u_{\text{lin}}(t,x) \geq 0$ for all $(t,x)\in\R^{1+3}$. One immediate consequence is that $u_0 \geq 0$. For $t > 0$, by Kirchhoff's formula, we write
\begin{equation}\label{eq:kirchhoffulin}
u_{\text{lin}}(t,x) = \partial_r(rM_{x}[u_0])(t) + tM_{x}[u_1](t) > 0.
\end{equation}
Rearranging this we can bound
$$\partial_r(rM_{x}[u_0])(t)  \geq - tM_{x}[u_1](t),$$
Similarly,
$$u_{\text{lin}}(-t,x) = \partial_r(rM_{x}[u_0])(t) - tM_{x}[u_1](t),$$
and therefore we have
\begin{equation}\label{eq:53}
\partial_r(rM_{x}[u_0])(t) \geq |tM_{x}[u_1](t)|.
\end{equation}
The d'Alembert formula for the spherical means of $u_{\text{lin}}$ is
\begin{multline*}
M_{x}[u_{\text{lin}}(t)](r) = \frac{1}{2r}\Big((r+t)M_{x}[u_0](r+t) +(r-t)M_{x}[u_0](r-t)\Big)\\
+ \frac{1}{2r}\int_{r-t}^{r+t}\rho M_{x}[u_1](\rho)\ud\rho.
\end{multline*}
We bound the final integral using \eqref{eq:53}, so that
\begin{align*}
\int_{r-t}^{r+t}\rho M_{x}[u_1](\rho)\ud\rho &\geq -\int_{r-t}^{r+t}|\rho M_{x}[u_1](\rho)|\ud\rho\\
&\geq -\int_{r-t}^{r+t}\partial_\rho(\rho M_{x}[u_0])(\rho)\ud\rho\\
&=(r-t)M_{x_0}[u_0](r-t) - (r+t)M_{x}[u_0](r+t).
\end{align*}
Hence, by \eqref{eq:53} we have
$$M_{x}[u_{\text{lin}}(t)](r) \geq \frac{(r-t)M_{x}[u_0](r-t)}{r}.$$
Note that $u_0$ must be nonzero somewhere, otherwise by the representation formula, \eqref{eq:kirchhoffulin}, $u_{\text{lin}}$ must have a point $(t,x)\in\R^{1+3}$ where $u_{\text{lin}}(t,x) < 0 < u_{\text{lin}}(-t,x)$. Therefore, we may fix $x_0 \in \R^3$, $q = r-t \geq 0$ such that $M_{x_0}[u_0](q) > 0$, and so for all $r \geq q$
$$M_{x_0}[u_{\text{lin}}(r-q)](r) \geq \frac{C}{r}$$
for some constant $C > 0$. By the Duhamel formula for $u$ \eqref{eq:duham}, we have $u \geq u_\text{lin}$ for all positive time, and so
\begin{equation}\label{eq:sol:decaylower}
M_{x_0}[u(r-q)](r) \geq \frac{C}{r}
\end{equation}

On the other hand, for the solutions $u$ constructed in Theorem \ref{thm:yu}, for all $q \geq 0$, $\omega \in \mathbb{S}^2$, $r$ sufficiently large we may bound $u$ like
\begin{align}
|u(r-q,r\omega)| &\lesssim \epsilon r^{-1}U(\epsilon\ln (r-q) - \delta,q,\omega)+(1+r-q)^{-3/2+C\epsilon}\nonumber\\
&\lesssim \frac{1}{r\ln(r-q)}.\label{eq:sol:decayupper}
\end{align}
These upper and lower bounds cannot both be true, hence $u_{\text{lin}}(t,x) < 0$ for some $(t,x) \in \R^{1+3}$.
\end{proof}

\rmk{\rm The solutions to \eqref{eq:wavejohn} that are constructed in Theorem \ref{thm:yu} are a subset of the more general solutions constructed in \cite{yu2022nontrivial}. In Theorem \ref{thm:yu} it is assumed that for solutions $U$ to the asymptotic system \eqref{eqn:hormasy}, the initial data $U_q(0,q,\omega) = A(q,\omega)$ is compactly supported in $q$. More generally one may assume decay like $A = O(\lra{q}^{-2-})$ when $q<0$ and $A = O(\lra{q}^{-1-})$ when $q>0$, as was shown in \cite{yu2022nontrivial}. In this case, we have (assuming $A =O(\lra{q}^{-\gamma})$ with $\gamma>1$) that
\begin{multline}
|U(q,\omega)|\leq\int_{-\infty}^{\infty} \frac{2A(p,\omega)}{A(p,\omega)s+2}\ \ud p\\
\leq \int_{|p|\leq s^{1/\gamma}}\min\{\frac{1}{s},1\}\ \ud p+\int_{|p|\geq s^{1/\gamma}}\lra{p}^{-\gamma}\ \ud p\lesssim \frac{1}{(1+s)^{1-1/\gamma}}.
\end{multline}
Then in lieu of \eqref{eq:sol:decayupper} we have, for large $r$,
$$|u(r-q,r\omega)| \lesssim \frac{1}{r(\ln(r-q))^{1-1/\gamma}}.$$
As $\gamma > 1$, $u$ still decays faster than $O(r^{-1})$, which is incompatible with the lower bound \eqref{eq:sol:decaylower}. Hence the solutions constructed in \cite{yu2022nontrivial} also blow up in finite negative time.}\rm

\bibliography{main}{}
\bibliographystyle{spmpsci}

\end{document}